\newcommand{\ZZ}{\mathbb{Z}}
\newcommand{\Aut}{{\rm{Aut}}}
\newcommand{\Sym}{{\rm{Sym}}}
\newcommand{\AGammaL}{{\rm{A}\Gamma\rm{L}}}
\newcommand{\A}{{\rm{A}}}
\newcommand{\V}{{\rm{V}}}
\newcommand{\E}{{\rm{E}}}
\newcommand{\K}{{\rm{K}}}
\newcommand{\Merge}{{\rm{M}}}
\newcommand{\PX}{{\rm{PX}}}
\newcommand{\dirPX}{\overrightarrow{\mathrm{PX}}}
\newcommand{\dirW}{\overrightarrow{\mathrm{W}}}
\newcommand{\vGa}{\Gamma}
\newcommand{\Dih}{\mathrm{Dih}}
\newcommand{\cM}{{\mathcal{M}}}
\newcommand{\cR}{{\mathcal{R}}}
\def\norm#1#2{{\bf N}_{#1}(#2)}
\def\cent#1#2{{\bf C}_{#1}(#2)}
\newtheorem{theorem}{Theorem}[section]
\newtheorem{lemma}[theorem]{Lemma}
\newtheorem{cor}[theorem]{Corollary}
\theoremstyle{definition}
\newtheorem{defn}[theorem]{Definition}
\newtheorem{notation}[theorem]{Notation}
\numberwithin{equation}{section}
\title{Elusive groups of automorphisms of digraphs of small valency}
\author{Michael Giudici, Luke Morgan, Primo\v{z} Poto\v{c}nik, Gabriel Verret}
\address{Michael Giudici and Luke Morgan: School of Mathematics and Statistics,\newline 
\indent  University of Western Australia, 35 Stirling Highway, Crawley, WA 6009, Australia.} 
\email{michael.giudici@uwa.edu.au, luke.morgan@uwa.edu.au} 
\address{Primo\v{z} Poto\v{c}nik: Faculty of Mathematics and Physics, University of Ljubljana, \newline
\indent Jadranska 21, SI-1000 Ljubljana, Slovenia.\newline
\indent Also affiliated with: IAM, University of Primorska, \newline
\indent Muzejski trg 2, SI-6000 Koper, Slovenia; and \newline
\indent Institute of Mathematics, Physics and Mechanics,\newline
\indent Jadranska 19, SI-1000 Ljubljana, Slovenia.
}
\email{primoz.potocnik@fmf.uni-lj.si}
\address{Gabriel Verret: School of Mathematics and Statistics, \newline
\indent University of Western Australia, 35 Stirling Highway, Crawley, WA 6009, Australia. \newline
\indent Also affiliated with: FAMNIT, University of Primorska, \newline
\indent Glagolja\v{s}ka 8, SI-6000 Koper, Slovenia.}
\email{gabriel.verret@uwa.edu.au}
\thanks{The research of the first and second authors is supported by the Australian Research Council grant DP120100446. The last author is supported by UWA as part of the ARC grant DE130101001.}
\begin{document}

\begin{abstract}
A transitive permutation group is called elusive if it contains no semiregular element. We show that no group of automorphisms of a connected graph of valency at most four is elusive and  determine all the elusive groups of automorphisms of connected digraphs of out-valency at most three. 
\end{abstract}

\maketitle

\section{Introduction}

A permutation is called \emph{semiregular} if all its cycles all have the same length.
Semiregular automorphisms play a special role in algebraic graph theory. For example, they have proved to be particularly helpful for constructing Hamiltonian cycles \cite{alspach}.  They have also been used to enumerate \cite{MR} and provide nice representations of vertex-transitive digraphs of small order \cite{biggs}. Moreover, many constructions of vertex-transitive digraphs yield obvious semiregular automorphisms, such as the Cayley construction and covering graph constructions.

Maru\v{s}i\v{c}  asked if every finite vertex-transitive digraph admits a semiregular automorphism \cite{Dragan}.  A digraph is a set together with an irreflexive but not necessarily symmetric binary relation. The word graph is reserved for the symmetric case.
It has been shown that all cubic \cite{DraganScap} and quartic \cite{Dobson} vertex-transitive graphs admit semiregular automorphisms. We significantly improve these results by proving the following, perhaps surprising, fact:

\begin{theorem}
\label{theorem:valency4}
Every vertex-transitive group of automorphisms of a connected graph of valency at most four contains a semiregular automorphism.
\end{theorem}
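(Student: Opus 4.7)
The plan is a case analysis by valency $d \in \{1,2,3,4\}$ of the graph $\Gamma$. The cases $d \le 2$ are elementary: a connected graph of valency one is a single edge, and a connected graph of valency two is a cycle, so any vertex-transitive subgroup of $\Aut(\Gamma)$ visibly contains a semiregular element (the edge-swap involution in the first case, and the rotation of order $n$ forced in any vertex-transitive subgroup of $\Dih_{2n}$ acting on an $n$-cycle in the second).

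For $d = 3$, replace each edge of $\Gamma$ by a pair of opposite arcs to obtain a connected digraph $\vec{\Gamma}$ of out-valency $3$ with $\Aut(\Gamma) = \Aut(\vec{\Gamma})$. Any vertex-transitive $G \le \Aut(\Gamma)$ then falls within the scope of the companion classification of elusive groups of automorphisms of connected digraphs of out-valency at most three (the paper's other main result, announced in the abstract). Inspecting that classification, one checks that no elusive group in the list preserves a symmetric arc-set of out-valency $3$, thereby ruling out the existence of a vertex-transitive elusive $G$ acting on a $3$-valent graph.

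For $d = 4$ the argument splits according to whether $G$ is arc-transitive. If $G$ is not transitive on the arcs of $\Gamma$, then $G_v$ is intransitive on $\Gamma(v)$, and the arcs partition into $G$-orbits whose out-valencies form a partition of $4$ into parts of size at most $3$. Each such orbit yields a $G$-vertex-transitive orbital digraph of out-valency at most $3$; when such a digraph is connected the companion classification applies directly, and when it is disconnected the set of its connected components is a $G$-invariant block system on $V(\Gamma)$ to which a standard quotient-and-induction argument reduces the connected case.

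The main obstacle is the arc-transitive case for $d=4$, which is not covered by the digraph classification. Here $G_v^{\Gamma(v)}$ is a transitive subgroup of $\Sym(4)$, namely one of $\ZZ_4$, $\ZZ_2 \times \ZZ_2$, $\Dih_8$, $A_4$, or $\Sym(4)$. For each possibility the plan is to invoke the known bounds on $|G_v|$ for arc-transitive actions on $4$-valent graphs, and then analyse the normal structure of $G$ (Sylow subgroups, socle, centralisers of involutions and of elements of order three in $G_v$) to extract a fixed-point-free element of prime order, which is automatically semiregular. The hardest sub-cases are those with the largest possible $G_v$, namely when the local action is $A_4$ or $\Sym(4)$; handling these will probably require appealing to the detailed structural results on $2$-arc-transitive $4$-valent graphs in order to locate a semiregular element explicitly or to derive a contradiction.
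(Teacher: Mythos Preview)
Your difficulty assessment in the $d=4$ case is inverted, and this hides the real gap. Once you know (Lemma~\ref{lemma: divisors of gv = divisors of local} and Lemma~\ref{lemma:divisors of g = divisors of gv}) that $G$ is a soluble $\{2,3\}$-group, take a minimal normal subgroup $N$; it is elementary abelian, and if it fails to be semiregular then $N_v^{\Gamma(v)}\neq 1$. When the local action is $A_4$ or $\Sym(4)$ it is primitive, so $N_v^{\Gamma(v)}$ is transitive, $N$ has at most two orbits, and Lemma~\ref{lemma:Normal Abelian Two Orbits} finishes immediately. The $2$-group local actions $\ZZ_4$, $\ZZ_2^2$, $\Dih_8$ are disposed of by Corollary~\ref{cor: local action a p group}. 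No structural theory of $2$-arc-transitive $4$-valent graphs is needed; the arc-transitive case is the \emph{easy} one.

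The genuine obstruction lies in your non-arc-transitive case, specifically when $G_v^{\Gamma(v)}\cong\Sym(3)$ with orbit lengths $\{3,1\}$. Your appeal to ``a standard quotient-and-induction argument'' when the $3$-valent orbital subgraph $\Gamma'$ is disconnected is not an argument: the components of $\Gamma'$ form a block system, but the quotient of $\Gamma$ by these blocks has no reason to have valency at most four (each block can send many matching edges out), so there is nothing to induct on; and a semiregular element of $G_B^B$ on a single block $B$ need not extend to a semiregular element of $G$ on $\V(\Gamma)$. This is exactly the case the paper works hardest on. The paper's route is different: the size-$1$ orbital is a $G$-invariant perfect matching $\cM$, and contracting each matched pair produces a connected $6$-valent graph $\Merge\Gamma$ on which $G$ acts faithfully and arc-transitively with an abelian non-semiregular normal subgroup. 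The Praeger--Xu characterisation forces $\Merge\Gamma\cong\PX(3,r,s)$, then Theorem~\ref{PXuMain} (together with \cite[Theorem~2.13]{PraegerXu} for $G\nleqslant W$) pins down $(r,s)=(4,1)$, i.e.\ $\Merge\Gamma\cong\K_{6,6}$; finally the known list of elusive groups of degree $12$ rules this out on the $24$ vertices of $\Gamma$. Your plan does not reach any of these ingredients.
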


The corresponding result  for digraphs of out-valency three does not hold. However, counterexamples are rare and are completely classified in our next theorem.  

\begin{theorem}
\label{theorem:valency3Directed}
Let $\Gamma$ be a finite connected digraph of out-valency at most three and let $G\leqslant \Aut(\Gamma)$ be transitive on vertices. Then $G$ does not contain a semiregular element if and only if
$\vGa\cong\dirPX(3,2^a,1)$ and, up to conjugacy in $W$, $G= N\rtimes\Lambda_i\langle\rho\rangle$ for some $i\in\{1,\ldots,2^{a-2}\}$, where $W$, $N$, $\Lambda_i$ and $\rho$ are as in Notation~\ref{NotationDirected}.
\end{theorem}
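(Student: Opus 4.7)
My plan is a combination of normal-quotient reduction and induction on $|\V(\vGa)|$, together with local analysis at a vertex.

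First I would dispose of the small out-valencies. If the out-valency is $1$, connectivity plus vertex-transitivity forces $\vGa$ to be a single directed cycle, on which cyclic rotation is semiregular. If the out-valency is $2$, the local action $G_v^{\vGa^+(v)}$ is a subgroup of $\Sym(2)$, and a short case analysis produces a fixed-point-free element of prime order, which is automatically semiregular. So I may assume the out-valency is exactly $3$.

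For the main case, fix a vertex $v$ and observe that $G_v^{\vGa^+(v)}$ is one of $1$, $\ZZ_2$, $\ZZ_3$, or $\Sym(3)$. Using the structural results for vertex-transitive digraphs of small out-valency from the earlier sections of the paper (together with the known bounds on stabilisers in locally small groups), I would bound $|G_v|$ in each of these four cases. The key organising observation is that elusiveness of $G$ means that no element of prime order in $G$ acts without fixed points, because any such element is semiregular; the entire proof is a hunt for such an element. To search for one, I would take a maximal vertex-intransitive normal subgroup $N \trianglelefteq G$, form the quotient digraph $\vGa/N$, and try to lift a semiregular element using induction on $|\V(\vGa)|$. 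If the lift succeeds, we are done.

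If no such lift is available, the orbit structure of $N$ is severely restricted. Iterating the quotient reduction shows that all the nested blocks must have order a power of $2$, and the interaction with the out-valency-$3$ local structure then forces $\vGa \cong \dirPX(3, 2^a, 1)$. Once the digraph is identified, a direct computation inside $\Aut(\dirPX(3, 2^a, 1))$, whose wreath-like structure is realised by the group $\W$ of Notation~\ref{NotationDirected}, pins down the elusive transitive subgroups and shows, up to conjugacy in $\W$, that they are exactly the groups $N \rtimes \Lambda_i\langle\rho\rangle$ for $i \in \{1, \ldots, 2^{a-2}\}$.

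The main obstacle is the reduction step: proving that the failure to produce a semiregular element actually forces the directed Praeger--Xu structure. This requires a careful bookkeeping of how the three out-arcs at each vertex distribute among the blocks of the iterated quotient, and delicate control of the $2$-power structure of these block sizes, coupled with the exclusion of odd primes from the order of $G_v$. Once $\vGa \cong \dirPX(3, 2^a, 1)$ has been established, the subgroup enumeration inside $\W$ is largely routine, but still needs care to verify that each listed conjugacy class is genuinely elusive and that the list is complete.
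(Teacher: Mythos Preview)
Your outline diverges from the paper's argument in a way that leaves a real gap. The paper does \emph{not} use induction on $|\V(\vGa)|$ or an iterated normal-quotient descent through a maximal intransitive normal subgroup. Instead, it first observes (via Corollary~\ref{cor: local action a p group}) that unless $G_v^{\vGa^+(v)}\cong\Sym(3)$ we are done, so $\vGa$ is $G$-arc-transitive; then Lemmas~\ref{lemma:divisors of g = divisors of gv} and~\ref{lemma: divisors of gv = divisors of local} force $G$ to be a $\{2,3\}$-group and hence soluble. One takes a \emph{minimal} normal subgroup $N$ (so $N$ is elementary abelian), notes that $N_v^{\vGa^+(v)}$ is transitive, and uses Lemma~\ref{lemma:Normal Abelian Two Orbits} to assume $N$ has at least three orbits. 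At this point the paper invokes Praeger's classification \cite[Theorem~2.9]{PraegerDigraph}, which immediately identifies $\vGa$ as $\dirPX(3,r,s)$; no iterated bookkeeping of arcs among blocks is needed. The determination of which $(r,s)$ and which subgroups $G$ are elusive is then the content of Theorem~\ref{PXuMain}, a substantial stand-alone computation in $W$, not a routine afterthought.

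Your proposal is missing precisely this structural input. The statement that ``all the nested blocks must have order a power of $2$'' is wrong as written: the minimal normal subgroup $N$ at issue is an elementary abelian $3$-group, so the $N$-orbits have $3$-power size; the parameter that must be a $2$-power is $r$, the \emph{number} of such orbits, and that fact is established only inside the analysis of Theorem~\ref{PXuMain} (by exhibiting a semiregular element whenever an odd prime divides $r$). More seriously, your inductive scheme does not explain how failure to lift a semiregular element from $\vGa/N$ actually forces the Praeger--Xu structure; since $\vGa/N$ is a directed cycle here, the inductive hypothesis carries almost no information back. Without Praeger's theorem (or an equivalent characterisation proved from scratch) the reduction step you describe does not go through.
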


Note that the group $G$ appearing in the conclusion of Theorem \ref{theorem:valency3Directed}  is a proper subgroup of $\Aut(\Gamma)$. We thus immediately obtain the following positive answer to Maru\v{s}i\v{c}'s question for digraphs of out-valency three:

\begin{cor}
\label{cor:Polyvalency3Directed}
Every finite vertex-transitive digraph of out-valency at most three admits a semiregular automorphism.
\end{cor}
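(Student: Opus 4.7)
The plan is to deduce Corollary~\ref{cor:Polyvalency3Directed} directly from Theorem~\ref{theorem:valency3Directed} by specialising to $G=\Aut(\vGa)$ and invoking the remark about properness recorded immediately after that theorem. Concretely, suppose for contradiction that there is a finite vertex-transitive digraph $\vGa$ of out-valency at most three such that $\Aut(\vGa)$ contains no semiregular element. The full automorphism group is trivially transitive on vertices, so Theorem~\ref{theorem:valency3Directed} applies with $G = \Aut(\vGa)$, forcing $\vGa\cong\dirPX(3,2^a,1)$ for some $a$ and, up to conjugacy in $W$, the equality $\Aut(\vGa) = N\rtimes\Lambda_i\langle\rho\rangle$ for some $i\in\{1,\ldots,2^{a-2}\}$.

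However, the paragraph following Theorem~\ref{theorem:valency3Directed} explicitly states that every such group $N\rtimes\Lambda_i\langle\rho\rangle$ sits as a \emph{proper} subgroup of the full automorphism group of $\dirPX(3,2^a,1)$. This directly contradicts the previous displayed equality, and the corollary follows.

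The only piece of mathematical content not already supplied by the statement of Theorem~\ref{theorem:valency3Directed} itself is this properness assertion, so that is where the real work, if any, lies. I would discharge it by producing a single concrete automorphism of $\dirPX(3,2^a,1)$ outside all $W$-conjugates of $N\rtimes\Lambda_i\langle\rho\rangle$: a natural candidate comes from a symmetry intrinsic to the $\dirPX$ construction (for instance, an involution interchanging the two fibres of the natural double cover), whose existence should be visible from the defining data of Notation~\ref{NotationDirected}. The main obstacle, accordingly, is not conceptual but bookkeeping: one must carefully describe $W$, the subgroups $N$ and $\Lambda_i$ and the element $\rho$, and then exhibit the extra automorphism in these terms. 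Once that is verified, the corollary is immediate.
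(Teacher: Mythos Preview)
Your argument is correct and is exactly the paper's approach: the corollary is obtained in one line from Theorem~\ref{theorem:valency3Directed} together with the remark that each exceptional group $N\rtimes\Lambda_i\langle\rho\rangle$ is a proper subgroup of $\Aut(\vGa)$. One small correction to your speculative last paragraph: there is no natural ``double cover'' here (the fibres of $\dirPX(3,2^a,1)$ over the directed cycle have size three), and the cleanest witness to properness is simply that $\dirW\leqslant\Aut(\dirPX(3,2^a,1))$ already contains the semiregular element $[1]^r\in S$, while each $N\rtimes\Lambda_i\langle\rho\rangle$ is elusive by Theorem~\ref{PXuMain}; you might also note that Theorem~\ref{theorem:valency3Directed} assumes connectedness, so a one-line reduction to components is needed.
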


The digraphs which occur in the conclusion of Theorem \ref{theorem:valency3Directed} will be defined in Section \ref{sec:PXU}. They are of the same flavour as the exceptions in \cite[Theorem 1.3]{GPV}, where the related question about edge-transitive groups of automorphisms with no semiregular element was considered.   
Theorems~\ref{theorem:valency4} and \ref{theorem:valency3Directed}  will be proved in Section~\ref{sec:last}, after some preliminary work in Sections~\ref{sec:prelim} and~\ref{sec:main}.

 Maru\v{s}i\v{c}'s question has been generalised to permutation groups. A transitive permutation group without a semiregular element is called \emph{elusive}. 
The Polycirculant Conjecture asserts that there is no  $2$-closed elusive group  \cite{sevenauthor}. The validity of this conjecture would positively answer Maru\v{s}i\v{c}'s question since the automorphism group of a digraph is itself $2$-closed. 
Given an elusive group it is then interesting to determine the connected digraphs upon which it acts and in particular those of smallest valency.  In this context, Theorem \ref{theorem:valency3Directed} gives a classification of all elusive groups with a connected suborbit of length three (that is, one for which the corresponding orbital digraph is connected) and Theorem \ref{theorem:valency4} says that there are no  elusive groups with a connected self-paired suborbit of length three or four.  This leads to the following natural question:

\medskip
\noindent
\textbf{Question.} \textit{What is the smallest integer $k$ such that there is an elusive group with a connected self-paired suborbit of length $k$?}
\medskip

A related question is to determine the smallest valency of a connected graph that admits an elusive group of automorphisms. By Theorem \ref{theorem:valency4} this is at least five.  We  note  that the group in Theorem \ref{PXuMain} (\ref{case3}) provides an example of an elusive group with a connected self-paired suborbit of length six. Hence the answer to either  of the above  questions is five or six.

\section{Preliminaries}\label{sec:prelim}
\subsection{Terminology}
A {\em digraph} $\vGa$ consists of a finite non-empty set of \emph{vertices} $\V(\vGa)$ and an  irreflexive binary relation $\A(\vGa)$ on $\V(\vGa)$. 
A digraph $\vGa$ is called {\em asymmetric} provided that $\A(\vGa)$  is asymmetric, while it is a \emph{graph} if $\A(\vGa)$ is symmetric. An element $(u,v)$ of $\A(\vGa)$ is called an \emph{arc} of $\vGa$; in this case, $v$ is called an \emph{out-neighbour} of $u$. The set of  out-neighbours of a vertex $v$ is denoted by $\vGa^+(v)$  and its cardinality is called  the \emph{out-valency} of $v$. If every vertex has the same out-valency  $d$, then $\Gamma$ is  said to have \emph{out-valency} $d$. If $\Gamma$ is a graph, then we sometimes write $\vGa(v)$ instead of $\vGa^+(v)$ and \emph{valency} instead of out-valency.

An \emph{automorphism} of a digraph $\vGa$ is a permutation of $\V(\vGa)$ that preserves $\A(\vGa)$. The set of automorphisms of $\Gamma$ forms a group, denoted by $\Aut(\Gamma)$. A digraph $\Gamma$ is said to be $G$-\emph{vertex-transitive} if $G$ is a subgroup of $\Aut(\Gamma)$ acting transitively on $\V(\Gamma)$. Similarly, $\Gamma$ is said to be $G$-\emph{arc-transitive} if $G$ acts transitively on $\A(\vGa)$. If $v$ is a vertex of $\Gamma$, $G_v^{\Gamma^+(v)}$ denotes the permutation group induced by the vertex-stabiliser $G_v$ in its action on $\Gamma^+(v)$.

Let $\Gamma$ be a $G$-vertex-transitive graph and let $N$ be a normal subgroup of $G$. The $N$-orbit containing a vertex $v$ is denoted by $v^N$. The \emph{normal quotient graph} $\Gamma/N$ has the $N$-orbits on $\V(\Gamma)$ as vertices, with  distinct vertices $u^N$ and $v^N$ being adjacent if and only if there exist $u' \in u^N$ and $v' \in v^N$ such that $u'$ and $v'$ are adjacent in $\Gamma$.  Note that $G$ has an induced transitive action on the vertices of $\Gamma/N$. Moreover, it is easily seen that the valency of $\Gamma/N$ is at most the number of orbits of $N_v^{\Gamma(v)}$.

We denote the dihedral group of order $2r$ in its natural faithful  action on $r$ points by $\Dih(r)$. Given a group $G$ and subgroups $X$ and $Y$, the normaliser (respectively, centraliser) of $X$ in $Y$ is denoted by $\norm Y X$ (respectively, $\cent Y X$).

If $p$ is a prime and $V$ is an elementary abelian $p$-group, then $V$ can be viewed naturally as a vector space over $\ZZ_p$. If $V$ is also normalised by some group $G$, then $V$ becomes a $\ZZ_p G$-module. We use this point of view a few times without comment.

\subsection{A few results}
We now prove a few results that will be useful later. Most of these are well-known  hence we only give a few proofs. 

\begin{lemma}
\label{lemma:divisors of g = divisors of gv}
Let $G$ be a transitive permutation group on $\Omega$. If a prime $p$ divides $|G|$ but $p$ does not divide $|G_\omega|$ for some $\omega \in \Omega$, then $G$ contains a semiregular element of order $p$.
\end{lemma}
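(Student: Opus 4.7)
The plan is to use Sylow theory together with the orbit-stabiliser theorem. First I would observe that since $G$ is transitive on $\Omega$, all point stabilisers are conjugate in $G$, so the hypothesis $p \nmid |G_\omega|$ for some $\omega$ upgrades to $p \nmid |G_\alpha|$ for every $\alpha \in \Omega$.

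Next, since $p$ divides $|G|$, let $P$ be a Sylow $p$-subgroup of $G$, which is nontrivial. For any $\alpha \in \Omega$, the intersection $P \cap G_\alpha$ is a $p$-subgroup of $G_\alpha$; since $p \nmid |G_\alpha|$, this intersection is trivial. Hence $P$ acts semiregularly on $\Omega$ (i.e., with trivial stabilisers).

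Finally, by Cauchy's theorem applied to the nontrivial $p$-group $P$, there is an element $g \in P$ of order $p$. From $\langle g \rangle \cap G_\alpha \leqslant P \cap G_\alpha = 1$ it follows that $g$ fixes no point of $\Omega$; since $g$ has prime order $p$, every cycle of $g$ on $\Omega$ has length a divisor of $p$ greater than $1$, and hence equal to $p$. Thus $g$ is semiregular of order $p$.

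There is no real obstacle here: the argument is a three-line application of orbit–stabiliser, Sylow, and Cauchy. The only thing to be careful about is the small point that an element of prime order without fixed points is automatically semiregular, which is why Cauchy (rather than just the existence of $P$) is needed to produce the element.
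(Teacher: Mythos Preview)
Your argument is correct and is the standard one. The paper does not actually supply a proof of this lemma: it is stated among results described as ``well-known'' and left without proof, so there is nothing to compare against beyond noting that your Sylow/Cauchy argument is exactly the intended folklore justification.
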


\begin{lemma}
\label{lemma:Pgroup}
Every nontrivial central element of a transitive permutation group is semiregular.
\end{lemma}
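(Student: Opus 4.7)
The plan is to use the transitivity of $G$ together with the fact that the central element commutes with every group element, to show that all $\langle g\rangle$-point-stabilisers coincide, from which semiregularity is immediate.

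In detail, let $g$ be a nontrivial central element of a transitive permutation group $G$ on $\Omega$. Fix $\omega\in\Omega$ and consider an arbitrary point $\omega'\in\Omega$. By transitivity, there exists $h\in G$ with $h\omega=\omega'$. The key observation is that, for any integer $k$, the element $g^k$ fixes $\omega'$ if and only if $g^kh\omega=h\omega$, which, using centrality ($g^kh=hg^k$) and the fact that $h$ is a bijection, is equivalent to $g^k\omega=\omega$. Hence the stabiliser of $\omega$ in $\langle g\rangle$ equals the stabiliser of $\omega'$ in $\langle g\rangle$. Since $\omega'$ was arbitrary, every point of $\Omega$ has the same stabiliser $H$ in $\langle g\rangle$.

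To conclude, observe that $H\neq\langle g\rangle$: if $H=\langle g\rangle$, then $g$ would fix every point of $\Omega$, contradicting the assumption that $g$ is nontrivial (as a permutation). Therefore each cycle of $g$ has length equal to the common index $[\langle g\rangle:H]>1$, proving that $g$ is semiregular.

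I do not anticipate any serious obstacle here; the argument is essentially a one-line consequence of centrality plus transitivity. The only point requiring a moment of care is the appeal to the injectivity of $h$ (to cancel it from both sides of $hg^k\omega=h\omega$), and the verification that the common stabiliser cannot be all of $\langle g\rangle$, which uses that $g$ acts nontrivially.
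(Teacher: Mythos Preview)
Your proof is correct and is the standard argument for this well-known fact. The paper itself omits the proof of this lemma (noting that ``most of these are well-known hence we only give a few proofs''), so there is nothing to compare against; your write-up would serve as a perfectly adequate justification.
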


\begin{lemma}\label{lemma:Normal Abelian Two Orbits}
Let $G$ be a transitive permutation group having a nontrivial abelian normal subgroup $N$ with at most two orbits. Then $N$ contains a semiregular element.
\end{lemma}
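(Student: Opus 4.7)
The plan is to split the proof according to the number of $N$-orbits on the underlying set $\Omega$.

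If $N$ is transitive on $\Omega$, then I would invoke the classical fact that a transitive abelian permutation group is regular. Every non-identity $n \in N$ then acts with all cycles of length $\mathrm{ord}(n)$, and is therefore semiregular.

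Otherwise $N$ has exactly two orbits $\Omega_1$ and $\Omega_2$. I would let $K_i$ denote the kernel of the action of $N$ on $\Omega_i$. Since $N$ acts faithfully on $\Omega = \Omega_1 \cup \Omega_2$, we have $K_1 \cap K_2 = 1$, and since $N$ is abelian and transitive on each $\Omega_i$, the quotient $N/K_i$ acts regularly on $\Omega_i$. In the easy sub-case $K_1 = K_2$, these two equalities force $K_1 = K_2 = 1$, so $N$ itself acts regularly on each orbit and every non-identity element of $N$ is semiregular.

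The main step is the sub-case $K_1 \ne K_2$. Here I would use transitivity of $G$ to pick $g \in G$ interchanging the two orbits, so that normality of $N$ in $G$ yields $K_1^g = K_2$. Choosing any $1 \ne k_1 \in K_1$ and setting $k_2 := k_1^g \in K_2$, I would form the element $n := k_1 k_2 \in N$. This $n$ is non-trivial, since $n = 1$ would give $k_1 = k_2^{-1} \in K_1 \cap K_2 = 1$. The image of $n$ in $N/K_1$ equals $k_2 K_1$, whose order is $\mathrm{ord}(k_2)$ (using $K_1 \cap K_2 = 1$); symmetrically, $\mathrm{ord}(n K_2) = \mathrm{ord}(k_1)$. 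Since $k_1$ and $k_2$ are $G$-conjugate, they have the same order, so $n$ acts on both orbits with every cycle of length $\mathrm{ord}(k_1)$, and is semiregular.

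The principal subtlety is precisely the case of distinct kernels: the key idea is that the $G$-action pairs up $K_1$ with $K_2$, and this pairing is exactly what allows the construction of an element $n \in N$ whose action is \emph{balanced} across the two orbits.
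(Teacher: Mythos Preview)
Your proof is correct and follows essentially the same line as the paper's. Both arguments reduce to the case where $N$ has two orbits and a nontrivial point-stabiliser (equivalently, $K_1\ne K_2$ in your notation), pick an element $k_1$ in the kernel of one orbit, conjugate it across via some $g\in G$ swapping the orbits, and show that $k_1 k_1^{g}$ is semiregular. The only cosmetic difference is that the paper takes $k_1$ of prime order, so that semiregularity follows immediately from ``fixed-point-free of prime order'', whereas you allow arbitrary $k_1$ and verify directly that the cycle lengths on the two orbits agree.
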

\begin{proof}
If $N$ is semiregular, then we are done. We may thus assume that $N$ is not semiregular and thus has exactly two orbits. Let $v$ and $w$ be representatives of these two orbits and let  $g\in G$ such that $v^g = w$. Let $x$ be an element of prime order in $N_v$. Note that $x$ fixes every element of $v^N$ and no element  of $u^N$. It easily follows that $xx^g$ is a semiregular element contained in $N$.
\end{proof}

\begin{lemma}
\label{lemma: divisors of gv = divisors of local}
Let $\vGa$ be a connected $G$-vertex-transitive digraph and let $v\in \V(\vGa)$. If $p$ is a prime dividing $|G_v|$, then $p$ divides $|G_v^{\vGa^+(v)}|$.
\end{lemma}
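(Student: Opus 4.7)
The plan is to argue by contradiction: assume $p \mid |G_v|$ but $p \nmid |G_v^{\vGa^+(v)}|$, and derive that a nontrivial $p$-element fixes every vertex of $\vGa$. To begin, I would let $P$ be a Sylow $p$-subgroup of $G_v$, which is nontrivial by assumption. The image of $P$ in $G_v^{\vGa^+(v)}$ is a $p$-group whose order divides $|G_v^{\vGa^+(v)}|$; since $p \nmid |G_v^{\vGa^+(v)}|$, this image is trivial, so $P$ fixes $\vGa^+(v)$ pointwise.

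Next I would propagate $P$ along arcs. For any $u \in \vGa^+(v)$ we have $P \leqslant G_u$, and vertex-transitivity gives $|G_v|=|G_u|$, so $P$ is also a Sylow $p$-subgroup of $G_u$. Conjugating by any $g \in G$ with $v^g = u$ identifies the action of $G_v$ on $\vGa^+(v)$ with that of $G_u$ on $\vGa^+(u)$, whence $p \nmid |G_u^{\vGa^+(u)}|$. The argument of the previous paragraph, applied at $u$ in place of $v$, then gives that $P$ fixes $\vGa^+(u)$ pointwise. Iterating along directed paths, $P$ fixes every vertex reachable from $v$ by a directed path in $\vGa$.

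To conclude, I would invoke the standard fact that a finite (weakly) connected vertex-transitive digraph is strongly connected: vertex-transitivity forces in-valency to equal out-valency, so the set of vertices reachable from $v$ by a directed path is closed under taking both in- and out-neighbours, hence is a union of weakly connected components of $\vGa$, and therefore equals $\V(\vGa)$. Hence $P$ fixes every vertex of $\vGa$, so $P = 1$, contradicting $p \mid |G_v|$. The only mild subtlety is this strong-connectivity observation; the remainder is a routine Sylow-and-propagation argument.
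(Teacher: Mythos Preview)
Your proof is correct. Both your argument and the paper's hinge on the strong connectivity of a finite connected vertex-transitive digraph, but they run in mirror-image fashion. You argue by contradiction and push a full Sylow $p$-subgroup $P\leqslant G_v$ forward along arcs: since $p\nmid |G_v^{\Gamma^+(v)}|$, the $p$-group $P$ fixes $\Gamma^+(v)$ pointwise, and iterating along directed paths forces $P$ to fix every vertex, whence $P=1$. The paper instead works directly with a single element $g$ of order $p$ in $G_v$, chooses a vertex $u$ moved by $g$ at minimal directed distance from $v$, and notes that the predecessor $v_t$ of $u$ on a shortest directed $v$--$u$ path is fixed by $g$ while $u\in\Gamma^+(v_t)$ is not; hence $p$ divides $|G_{v_t}^{\Gamma^+(v_t)}|$, and vertex-transitivity transfers this to $v$. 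The minimal-distance trick is a touch shorter and avoids Sylow theory altogether; your forward-propagation framing is equally standard and makes the ``kernel spreads along arcs'' mechanism very explicit. One small simplification: you do not actually need $P$ to be a Sylow subgroup of $G_u$ in your inductive step---it suffices that $P$ is a $p$-group and, by vertex-transitivity, $|G_u^{\Gamma^+(u)}|$ is coprime to $p$, so the image of $P$ there is automatically trivial.
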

\begin{proof}
Since $\vGa$ is connected, vertex-transitive and finite, it follows easily that it is strongly connected. Let $g$ be an element of order $p$ in $G_v$. Since $g$ is nontrivial, it must move some vertex. Let $u$ be a vertex of $\vGa$ moved by $g$ at minimal distance from $v$. By the strong connectivity of $\vGa$, there is a path $v,v_1,\ldots,v_t,u$. By the minimality of $u$, we have $g\in G_{v_t}$. Since $g$ acts nontrivially on $\vGa^+(v_t)$, it follows that $p$ divides $|G_{v_t}^{\vGa^+(v_t)}|$ and, by vertex-transitivity, also $|G_v^{\vGa^+(v)}|$.
\end{proof}

\begin{cor}
\label{cor: local action a p group}
Let $\vGa$ be a connected $G$-vertex-transitive digraph and let $p$ be a prime. If $G_v^{\vGa^+(v)}$ is a $p$-group, then $G$ contains a semiregular element.
\end{cor}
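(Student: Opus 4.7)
The plan is to combine the three preceding lemmas (\ref{lemma:divisors of g = divisors of gv}, \ref{lemma:Pgroup}, and \ref{lemma: divisors of gv = divisors of local}) in a very direct way. The first step is to use Lemma~\ref{lemma: divisors of gv = divisors of local} to transfer the hypothesis on the local action to the stabiliser itself: since every prime dividing $|G_v|$ must divide $|G_v^{\vGa^+(v)}|$, and the latter is a $p$-group, the only prime that can divide $|G_v|$ is $p$. Hence $G_v$ is a $p$-group.

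Next, I would split on whether $G$ is itself a $p$-group. If some prime $q \neq p$ divides $|G|$, then $q$ cannot divide $|G_v|$, and Lemma~\ref{lemma:divisors of g = divisors of gv} immediately produces a semiregular element of order $q$. Otherwise $G$ is a $p$-group (and nontrivial, since it acts transitively on the vertex set of a nontrivial digraph), so $\cent{G}{G}$ is nontrivial, and Lemma~\ref{lemma:Pgroup} says that any nontrivial central element of $G$ is semiregular.

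There is essentially no obstacle here: the corollary is a two-line argument once the three preliminary lemmas are in place. The only edge case worth a passing remark is the degenerate situation where $\vGa$ has a single vertex, which is not genuinely interesting for the statement.
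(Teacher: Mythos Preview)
Your proposal is correct and matches the paper's own proof essentially line for line: the paper first invokes Lemma~\ref{lemma: divisors of gv = divisors of local} to conclude that $G_v$ is a $p$-group, then cites Lemmas~\ref{lemma:divisors of g = divisors of gv} and~\ref{lemma:Pgroup} to finish. Your version simply spells out the case split that the paper leaves implicit.
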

\begin{proof}
By Lemma \ref{lemma: divisors of gv = divisors of local} we have that $G_v$ is a $p$-group. The result, then follows by Lemmas~\ref{lemma:divisors of g = divisors of gv} and~\ref{lemma:Pgroup}.
\end{proof}

\begin{lemma}\label{stupidcyclicmodules}
Let $\langle\sigma\rangle$ be a cyclic $2$-group, let $V$ be a $\ZZ_2 \langle\sigma\rangle$-module of dimension $n$ and let $\cent V \sigma$ be the submodule  consisting of elements fixed by $\sigma$. If $\dim \cent V \sigma=1$, then $V$ has exactly one submodule of each dimension $i\in\{0,1,\ldots,n\}$ and these submodules form a chain with respect to inclusion.
\end{lemma}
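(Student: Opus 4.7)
\medskip
\noindent
\textbf{Proof plan for Lemma~\ref{stupidcyclicmodules}.} The plan is to reduce the statement to a standard fact from linear algebra by studying the nilpotent operator $\tau := \sigma - 1$ on $V$. Since $\langle\sigma\rangle$ is a cyclic $2$-group of order $2^k$ (say) and $\mathrm{char}\,\ZZ_2 = 2$, we have
$$\tau^{2^k} = (\sigma-1)^{2^k} = \sigma^{2^k}-1 = 0,$$
so $\tau$ is nilpotent. Moreover, the kernel of $\tau$ is precisely $\cent V \sigma$, so by hypothesis $\dim\ker\tau = 1$.

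Next I would invoke the fact that a nilpotent endomorphism of a finite-dimensional vector space whose kernel is one-dimensional consists of a single Jordan block. (Indeed, in the Jordan decomposition of a nilpotent operator the number of blocks equals the dimension of the kernel.) Consequently there exists a vector $v \in V$ such that $v, \tau v, \tau^2 v, \ldots, \tau^{n-1}v$ is a basis of $V$ and $\tau^n v = 0$. Equivalently, $V$ is a cyclic $\ZZ_2\langle\sigma\rangle$-module generated by $v$.

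Finally, I would observe that every $\ZZ_2\langle\sigma\rangle$-submodule of $V$ is $\tau$-invariant, and conversely. Using the basis above, a direct check shows that the only $\tau$-invariant subspaces of $V$ are
$$W_i := \ker\tau^i = \langle \tau^{n-i}v,\tau^{n-i+1}v,\ldots,\tau^{n-1}v\rangle, \qquad i = 0,1,\ldots,n,$$
(any $\tau$-invariant subspace is determined by the largest $i$ for which it meets $\ker\tau^i\setminus\ker\tau^{i-1}$, and then must contain $W_i$). These submodules satisfy $\dim W_i = i$ and $W_0 \subset W_1 \subset \cdots \subset W_n = V$, which gives the desired conclusion.

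The only step requiring any real thought is the reduction to a single Jordan block, but it is a standard consequence of the rational/Jordan canonical form for nilpotent operators; no substantial obstacle is expected.
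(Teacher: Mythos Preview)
Your argument is correct. Reducing to the nilpotent operator $\tau=\sigma-1$, noting that $\ker\tau=\cent V\sigma$ is one-dimensional, and invoking the Jordan form to conclude that $\tau$ is a single Jordan block is a clean and efficient route; the identification of the $\tau$-invariant subspaces with the chain $\ker\tau^i$ is then standard. One tiny point worth making explicit in the write-up of the last step: from ``$W$ meets $\ker\tau^i\setminus\ker\tau^{i-1}$'' you get $W\supseteq W_i$ by back-substitution along the Jordan basis, and the maximality of $i$ forces $W\subseteq W_i$; together these give $W=W_i$.

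The paper proceeds differently: it argues by induction on $n$, first observing that a $2$-group acting on a nonzero $\ZZ_2$-space always has a nonzero fixed vector (so a chain of submodules exists), and then showing directly that two distinct $2$-dimensional submodules would force an extra fixed vector, contradicting $\dim\cent V\sigma=1$; uniqueness in higher dimensions follows by passing to $V/\cent V\sigma$. Your approach trades this hands-on induction for a one-line appeal to Jordan/rational canonical form, which is quicker and makes the cyclicity of $V$ transparent; the paper's argument, on the other hand, is entirely self-contained and avoids any structure theory beyond the fixed-point lemma for $p$-groups. Either is perfectly adequate here.
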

\begin{proof}
Since a $2$-group acting on a vector space over $\ZZ_2$ has a non-zero fixed-point, we can recursively construct a chain of submodules $V_0<V_1< \cdots <  V_n$ with $\dim V_i=i$.  It therefore suffices to show that  $V$ has a unique module of each possible dimension.  

We use induction on $n$. If $n\leqslant  2$, then the statement is obvious since  $\dim \cent V \sigma=1$. Assume now that $n\geqslant 3$ and suppose that there exist $U_1$ and $U_2$, two distinct submodules of dimension two. Since $U_1$ and $U_2$ are $\ZZ_2\langle\sigma\rangle$-modules they have nontrivial fixed-point spaces and hence $U_1 \cap U_2 = \cent V \sigma$. Let $\cent V \sigma=\langle v \rangle $, let $u\in U_1 \setminus \cent V \sigma$ and let $w\in U_2\setminus \cent V \sigma$. Then $u^\sigma = u+v$ and $w^\sigma = w+v$. This implies that $(u+w)^\sigma = u+v+w+v = u+w$ and $u+w\in\cent V \sigma=U_1 \cap U_2$, which is a contradiction. Hence $V$ has a unique submodule of dimension two. It follows that $\dim \cent{V/\cent V \sigma}{\sigma}=1$ and  we obtain the result by induction.
\end{proof}

\subsection{Praeger-Xu digraphs}\label{sec:PXU}

We now define the digraphs that appear in Theorem~\ref{theorem:valency3Directed}. (These digraphs were introduced and studied systematically in~\cite{PraegerDigraph,PraegerXu}.)

\begin{defn}\label{DefinitionDigraph}
Let $r$ and $s$ be integers such that $r\geqslant 3$ and  $1\leqslant  s \leqslant  r$. The \emph{Praeger-Xu digraph} $\dirPX(3,r,s)$ has vertex set  $\ZZ_3^s\times \ZZ_r$ and arc-set 
$$\{((x_1,x_2,\ldots,x_s,y),(x_2,\ldots,x_s,x_{s+1},y+1))\mid x_i\in \ZZ_3, y\in\ZZ_r\}.$$\hfill $\diamond$
\end{defn}

Here is another description of these digraphs that is sometimes easier to work with. The digraph $\dirPX(3, r, 1)$ is the lexicographic product of a directed cycle of length $r$ and an arcless graph on three vertices. In other words, $\V(\dirPX(3, r, 1)) = \ZZ_3 \times \ZZ_r$ with an arc from $(u, x)$ to $(v, y)$ if and only if $y - x=1$. For $s \geqslant 2$, the digraph $\dirPX(3, r, s)$ has vertex-set the set of directed paths of $\dirPX(3, r, 1)$ of length $s - 1$ and, given two such paths $P_1$ and $P_2$, there is an arc from $P_1$ to $P_2$ in $\dirPX(3, r, s)$ if and only if the initial $(s-2)$-subpath of $P_2$ is equal to the terminal $(s-2)$-subpath of $P_1$.  

The underlying graph of $\dirPX(3,r,s)$ is denoted by $\PX(3,r,s)$. It is not hard to see that $\dirPX(3,r,s)$ is a connected asymmetric digraph of order $r3^s$ with out-valency $3$ and that $\PX(3,r,s)$ is a connected graph of valency $6$.

There is a natural action of $\Sym(\ZZ_3) \wr \Dih(r)=\Sym(\ZZ_3)^r\rtimes\Dih(r)$ as a group of automorphisms of  $\PX(3,r,1)$, with an induced faithful action as a group of automorphisms of $\PX(3,r,s)$. Namely,  for $g=(g_0,\ldots,g_{r-1},h)\in \Sym(\ZZ_3)^r\rtimes\Dih(r)$ (with $g_0,\ldots,g_{r-1}\in\Sym(\ZZ_3)$ and $h\in\Dih(r)$), we have
\begin{equation}
(x_1,x_2,\ldots,x_s,y)^g=(x_1^{g_y},x_2^{g_{y+1}},\ldots,x_s^{g_{y+s-1}},y^h), \label{gogo}
\end{equation}
where the indices of $g$ are taken modulo $r$. This action of  $\Sym(\ZZ_3) \wr \Dih(r)$ on $\ZZ_3^s\times \ZZ_r$  will play a crucial role in our investigation and is the main topic of the next section.

\section{Some elusive subgroups of $\Sym(\ZZ_3) \wr \Dih(r)$}\label{sec:main}

We first fix some notation. 

\begin{notation}\label{NotationDirected}
Let $r\geqslant 3$, let $\Omega=\{0\}\times \ZZ_r\subseteq \ZZ_3\times \ZZ_r$ and  let $W=\Sym(\ZZ_3)\wr \Dih(r)=\Sym(\ZZ_3)^r\rtimes\Dih(r)$. We view $W$  as an imprimitive permutation group on $\ZZ_3\times\ZZ_r$ in the natural way. Let $\sigma$ and $\tau$ be the elements of $W$ such that, for $(x,y)\in\ZZ_3\times\ZZ_r$ we have:
\begin{eqnarray*}
(x,y)^\sigma &=& (x,y+1),\\
(x,y)^\tau &=& (x,-y-1).
\end{eqnarray*}
Let $D=\langle\sigma,\tau\rangle$, let $\phi$ be the natural epimorphism from $W$ onto $D$, let $B=\ker \phi= \Sym(\ZZ_3)^r$ and let 
$\dirW=B\rtimes \langle\sigma\rangle$. Let $S$ be the unique Sylow $3$-subgroup of $B$. 

To reduce ambiguity, we denote by $\bf{1}$ the identity of $\Sym(\ZZ_3)$ and by $\bf{-1}$ the involution of $\Sym(\ZZ_3)$ that fixes $0$. We also think of $\ZZ_3$ as acting regularly on itself and thus $\Sym(\ZZ_3)=\ZZ_3\rtimes\langle \bf{-1}\rangle$.  Let $V=\langle {\bf{-1}} \rangle^r\leqslant B$.

We write elements of $B$ as $r$-tuples of elements of $\Sym(\ZZ_3)$ indexed by $(0,\ldots,r-1)$. For $x\in\Sym(\ZZ_3)$ and $i\geqslant 1$, we denote by $[x]^i$ the $i$-tuple $(x,x,\ldots,x)$. For example, $[{\bf{1}}]^r$ is the identity of $B$. Set
 $$\alpha=({\bf{1}},{\bf{1}},\ldots,{\bf{1}},{\bf{1}},{\bf{-1}})\in V\>\>\textrm{ and }\>  \rho=\alpha\sigma.$$
If $r$ is  a power of $2$ (and thus $r\geqslant 4$) then let 
 $$\beta=([{\bf{1}}]^{r/4},[{\bf{-1}}]^{r/4},[{\bf{1}}]^{r/4},[{\bf{-1}}]^{r/4})\in V,\>\>\mu=\beta\tau,$$
and define the following subgroups of $B$:
\begin{eqnarray*}
 N  & = & \{(x+y,x,y,x-y)\mid x,y\in \mathbb{Z}_3^{r/4}\}, \\
 N^*  & =& \{  (-x+y,x,y,x+y)\mid x,y\in \mathbb{Z}_3^{r/4}\}, \\
 \Lambda & =& \{  (z,z,z,z)  \mid z\in \langle{\bf{-1}}\rangle^{r/4}\}.
\end{eqnarray*}

Note that $\Lambda$ is normalised by $\langle\sigma\rangle$ and thus can be viewed as a $\ZZ_2 \langle\sigma\rangle$-module. Observe that $\cent \Lambda \sigma$ is the $1$-dimensional subspace of $\Lambda$ spanned by $[{\bf{-1}}]^r$. By Lemma~\ref{stupidcyclicmodules}, $\Lambda$ has a unique submodule of each dimension $i\in\{0,1,\ldots,r/4\}$, which we will denote by $\Lambda_i$.  \hfill $\diamond$
\end{notation}

Note that $\langle \rho \rangle $ has two orbits on $\ZZ_3\times\ZZ_r$, one of which is $\Omega$. In particular, $\rho$ has order $2r$. Observe also that $\dirW$ acts as a group of automorphisms of $\dirPX(3,r,s)$.  We now prove a series of technical lemmas using Notation~\ref{NotationDirected}.

\begin{lemma}\label{LemmaTechnical2}
Assume Notation~\ref{NotationDirected}. If $r$ is a power of $2$ and $\lambda\in\Lambda$, then $\langle \rho, \lambda\mu\rangle=\Lambda\langle\rho,\mu\rangle$ and, in particular, $\Lambda\leqslant \langle\rho,\mu\rangle$. 
\end{lemma}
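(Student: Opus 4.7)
The plan is first to reduce both claims to the single statement $\Lambda\leqslant\langle\rho,\lambda\mu\rangle$. The inclusion $\langle\rho,\lambda\mu\rangle\subseteq\Lambda\langle\rho,\mu\rangle$ is immediate from $\lambda\mu=\lambda\cdot\mu\in\Lambda\langle\rho,\mu\rangle$. Conversely, once $\Lambda\leqslant\langle\rho,\lambda\mu\rangle$ is known, $\lambda\in\langle\rho,\lambda\mu\rangle$ and so $\mu=\lambda^{-1}(\lambda\mu)\in\langle\rho,\lambda\mu\rangle$, yielding $\Lambda\langle\rho,\mu\rangle\leqslant\langle\rho,\lambda\mu\rangle$. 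The ``in particular'' statement is then the special case $\lambda=[{\bf 1}]^r$.

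To prove $\Lambda\leqslant\langle\rho,\lambda\mu\rangle$ I will exhibit two elements of this group lying in $\Lambda$. The first is $\rho^r$: a direct computation of the action of $\rho^r$ on $\ZZ_3\times\ZZ_r$ gives $\rho^r=[{\bf{-1}}]^r$, which spans $\cent\Lambda\sigma$. This already settles the case $r=4$, since then $\dim_{\ZZ_2}\Lambda=1$. For $r\geqslant 8$ I consider $g:=(\lambda\mu)\rho(\lambda\mu)\rho$. Since $\phi(g)=\tau\sigma\tau\sigma=1$, we have $g\in B$, and using $\tau\sigma\tau^{-1}=\sigma^{-1}$ together with the fact that $V$ is abelian one rewrites $(\lambda\mu)\rho=\lambda\beta\alpha'\sigma^{-1}\tau$ with $\alpha'=\tau\alpha\tau$, arriving at $g_y=\lambda_y\beta_y\alpha'_y\cdot\lambda_{-y}\beta_{-y}\alpha'_{-y}$. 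Since $\alpha$ has ${\bf{-1}}$ only in position $r-1$, both $\alpha_{-y-1}$ and $\alpha_{y-1}$ equal ${\bf{-1}}$ precisely when $y=0$, forcing $\alpha'_y\alpha'_{-y}={\bf 1}$ for every $y$. Hence $g_y=(\beta_y\beta_{-y})(\lambda_y\lambda_{-y})$, which a quick check shows is periodic in $y$ with period $r/4$; thus $g\in\Lambda$.

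Finally, since $\alpha\in V$ commutes with $\Lambda\leqslant V$, conjugation by $\rho$ acts on $\Lambda$ exactly as conjugation by $\sigma$, so it suffices to show that the $\ZZ_2\langle\sigma\rangle$-submodule of $\Lambda$ generated by $g$ is all of $\Lambda$. By Lemma~\ref{stupidcyclicmodules} the submodules of $\Lambda$ form a chain, and identifying $\Lambda\cong\ZZ_2^{r/4}$ with $\sigma$ acting as cyclic shift, the unique codimension-one submodule $\Lambda_{r/4-1}$ is the even-weight subspace (the kernel of the sum-of-coordinates functional, which is the only $\sigma$-invariant linear functional up to scalar). Writing $g$ additively as $c+(\lambda+\lambda^\ast)$, where $c$ corresponds to the period $(1,-1,\ldots,-1)$ and $\lambda^\ast_y:=\lambda_{-y}$, the vector $\lambda+\lambda^\ast$ is symmetric under $y\mapsto -y$ and hence of even weight, while $c$ has weight $r/4-1$, which is odd since $r/4\geqslant 2$ is a power of $2$. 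Therefore $g\notin\Lambda_{r/4-1}$ and its $\sigma$-submodule equals $\Lambda$, as needed. The main obstacle is the calculation of $g_y$ and spotting the telescoping cancellation $\alpha'_y\alpha'_{-y}={\bf 1}$; after that, the rest is a short application of Lemma~\ref{stupidcyclicmodules}.
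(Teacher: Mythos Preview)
Your proof is correct. The reduction to $\Lambda\leqslant\langle\rho,\lambda\mu\rangle$, the computation $\rho^r=[{\bf -1}]^r$, the identity $g_y=v_yv_{-y}$ with $v=\lambda\beta\alpha'$, the cancellation $\alpha'_y\alpha'_{-y}={\bf 1}$, the periodicity of $\beta_y\beta_{-y}$ and of $\lambda_y\lambda_{-y}$ with period $r/4$, and the parity argument showing $g\notin\Lambda_{r/4-1}$ all check out. (For the last step you implicitly use that the self-paired indices $y=0$ and $y=r/8$ in $\ZZ_{r/4}$ contribute $0$ to $\lambda+\lambda^\ast$, so its coordinate sum vanishes; and that $r/4\geqslant 2$ is a power of $2$, so $r/4-1$ is odd.)

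The paper argues differently. It sets $x=\lambda\beta$, lets $U\leqslant V$ be the $\ZZ_2\langle\sigma\rangle$-submodule generated by $x$, and applies Lemma~\ref{stupidcyclicmodules} in $V$ (not in $\Lambda$) to deduce $\Lambda<U$; then it observes $[U,\langle\sigma\rangle]$ has codimension one in $U$ and hence contains $\Lambda$, and finally shows $[U,\langle\sigma\rangle]\leqslant\langle\rho,\lambda\mu\rangle$ via the identity $x^{\sigma^i}x^{\tau}=(\rho^{-i}\lambda\mu)^2$ for all $i$. Your approach is essentially the same identity specialised to a single value of $i$ (your $g=((\lambda\mu)\rho)^2$ is conjugate to one of the paper's $(\rho^{-i}\lambda\mu)^2$), but instead of invoking the chain structure of $V$ you land the element directly in $\Lambda$ and then argue inside $\Lambda$ by the explicit weight computation. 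This trades the paper's cleaner module-theoretic step for a concrete coordinate calculation; both routes rely on Lemma~\ref{stupidcyclicmodules}, just applied in different modules.
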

\begin{proof}
Recall that $r\geqslant3$ and thus $r\geqslant4$.  Observe that, since $V$ is abelian, $\rho$ and $\mu$ act on $V$ as $\sigma$ and $\tau$, respectively. Note  that $\langle \rho,\mu\rangle$ normalises $\Lambda$ and hence $\langle \rho, \lambda\mu\rangle\leqslant\langle \Lambda,\rho,\mu\rangle=\Lambda\langle\rho,\mu\rangle$.  It thus suffices to show that $\Lambda\leqslant  \langle \rho, \lambda\mu\rangle$. By definition, we have $\lambda=(z,z,z,z)$ for some  $z\in \langle{\bf{-1}}\rangle^{r/4}$. Let  $x=\lambda\beta$, that is, $x=(z,{\bf{-1}} z,z,{\bf{-1}}z)$. Since $V\cong \ZZ_2^r$ and since $\sigma$ normalises $V$, we may think of $V$ as a $\ZZ_2\langle \sigma \rangle$-module.  Let $U$  be the  submodule of $V$ generated by $x$. Note that $\Lambda$ is a submodule of $V$ but $x\notin \Lambda$ and thus $U\nleqslant\Lambda$. Since $\cent V {\sigma} =\langle ({\bf{-1}},\dots,{\bf{-1}})\rangle$, we may apply Lemma~\ref{stupidcyclicmodules}, which yields that  $U>\Lambda$ and thus  $\dim U> \dim \Lambda=r/4$. 
Now $\cent U {\sigma}=\langle ({\bf{-1}},\dots,{\bf{-1}})\rangle$ is 1-dimensional and $U/ \mathbf C_U(\sigma) \cong [U,\langle\sigma\rangle]$, therefore the commutator  subspace  $[U,\langle\sigma\rangle]$ has codimension 1 in $U$.
 It follows that $\dim[U,\langle\sigma\rangle]\geqslant r/4$ and hence, by Lemma~\ref{stupidcyclicmodules}, $\Lambda\leqslant  [U,\langle\sigma\rangle]$. We now show that $[U,\langle\sigma\rangle] \leqslant  \langle \rho, \lambda\mu\rangle$.

Observe first that the group $[U,\langle\sigma\rangle]$ is generated by all the elements of the form $x^{\sigma^i}x^{\sigma^j}$ for  $i$ and $j$ integers. Since $\tau$ normalises $V$ and $V$ is abelian, we see that $x^{\sigma^i}x^{\sigma^j} =x^{\sigma^i}x^\tau x^{\sigma^j} x^\tau$. In order to show that $[U,\langle\sigma\rangle] \leqslant  \langle \rho, \lambda\mu\rangle$, it thus suffices to show that $x^{\sigma^i}x^\tau \in \langle \rho, \lambda\mu\rangle$ for every integer $i$.

It follows immediately from Notation~\ref{NotationDirected} that $\alpha^2=1$, $\alpha^\tau = \alpha^\sigma$ and $\sigma^\tau = \sigma^{-1}$, implying that
\begin{equation}
\rho^\tau = (\alpha\sigma)^\tau = \alpha^\tau \sigma^\tau = \alpha^\sigma \sigma^{-1} = \sigma^{-1}\alpha = (\alpha\sigma)^{-1} = \rho^{-1}. \label{gogo2}
\end{equation}
This implies that $\tau = \rho^{-i}\tau\rho^{-i}$ for every integer $i$. Hence
$$x^{\sigma^i}x^\tau = x^{\sigma^i}\tau x \tau = x^{\sigma^i} \rho^{-i}\tau\rho^{-i} x \tau = \rho^{-i} x^{\sigma^i\rho^{-i}} \tau\rho^{-i} x \tau.$$
Since $\rho = \alpha\sigma$, $\alpha\in V$ and $\sigma$ normalises $V$, we see that $\rho^i = \gamma \sigma^i$ for some $\gamma\in V$. As $V$ is abelian, it follows that $x^{\sigma^i\rho^{-i}} = x^\gamma=x$ and hence
$$x^{\sigma^i}x^\tau=\rho^{-i} x \tau\rho^{-i} x \tau=(\rho^{-i} x \tau)^2=(\rho^{-i} \lambda\beta\tau)^2 =(\rho^{-i}\lambda\mu)^2 \in  \langle \rho, \lambda\mu\rangle.$$ 
This implies that $[U,\langle\sigma\rangle] \leqslant  \langle \rho, \lambda\mu\rangle$, completing the proof that $\langle \rho, \lambda\mu\rangle=\Lambda\langle\rho,\mu\rangle$. Taking $\lambda=1$ yields $\Lambda\leqslant \langle\rho,\mu\rangle$. 
\end{proof}

\begin{lemma}\label{lemmaNormalisers}
Assume Notation~\ref{NotationDirected}. If $r$ is a power of $2$, then 
\begin{enumerate}
\item $\norm{V}{N}=\langle\rho,\mu\rangle\cap B= \Lambda$ and \label{oneone}
\item $\norm {V\rtimes D} {N}= \langle\rho,\mu\rangle$.
\end{enumerate}
\end{lemma}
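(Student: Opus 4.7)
The plan is to prove both statements together by combining three ingredients: direct verification that $\rho$ and $\mu$ normalise $N$, a direct computation of $\norm{V}{N}$, and finally bootstrapping from $V$ to $V\rtimes D$ via the epimorphism $\phi$.

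First, I would verify that both $\rho$ and $\mu$ normalise $N$ by direct calculation on $S$. Since $\alpha\in V$ is abelian and $\sigma$ acts on $g\in S$ by $(g^\sigma)_y = g_{y-1}$, conjugation by $\rho=\alpha\sigma$ is a ``twisted'' cyclic shift: $(g^\rho)_y = g_{y-1}$ for $y\neq 0$, and $(g^\rho)_0 = -g_{r-1}$. Applied to a generic element $(x+y,x,y,x-y)\in N$ (grouped into four blocks of size $r/4$), one checks that the image again lies in $N$, now with parameters $x'=(x_{r/4-1}+y_{r/4-1},x_0,\dots,x_{r/4-2})$ and $y'=(x_{r/4-1},y_0,\dots,y_{r/4-2})$. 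Similarly $\mu=\beta\tau$ reverses the order of blocks while introducing sign changes on blocks~2 and~4; a block-wise computation shows $(x+y,x,y,x-y)^\mu = (-\overline{x-y},\overline{y},-\overline{x},\overline{x+y})\in N$, where $\overline{\,\cdot\,}$ denotes coordinate reversal within a block. This step is the main obstacle, though of a purely bookkeeping nature.

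Next, I would compute $\norm{V}{N}$ directly. Write $v\in V$ as $(\epsilon_0,\epsilon_1,\epsilon_2,\epsilon_3)$ with each $\epsilon_j$ a diagonal $\pm 1$ map on $\ZZ_3^{r/4}$. Component-wise conjugation by $v$ sends $(x+y,x,y,x-y)$ to $(\epsilon_0(x+y),\epsilon_1 x,\epsilon_2 y,\epsilon_3(x-y))$, which lies in $N$ for every $x,y\in\ZZ_3^{r/4}$ only if $\epsilon_0(x+y) = \epsilon_1 x + \epsilon_2 y$ and $\epsilon_3(x-y) = \epsilon_1 x - \epsilon_2 y$ as identities in $x,y$. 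Comparing coefficients forces $\epsilon_0=\epsilon_1=\epsilon_2=\epsilon_3$, i.e.\ $v\in\Lambda$; hence $\norm{V}{N}=\Lambda$.

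To finish (1), note that $\rho,\mu\in V\rtimes D$ and $(V\rtimes D)\cap B = V$, so $\langle\rho,\mu\rangle\cap B = \langle\rho,\mu\rangle\cap V$. The first step gives $\langle\rho,\mu\rangle\leqslant\norm{V\rtimes D}{N}$, so intersecting with $V$ yields $\langle\rho,\mu\rangle\cap V\leqslant\norm{V}{N}=\Lambda$; the reverse inclusion $\Lambda\leqslant\langle\rho,\mu\rangle$ is Lemma~\ref{LemmaTechnical2} applied with $\lambda=\mathbf{1}$. This proves (1).

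For (2), the inclusion $\langle\rho,\mu\rangle\leqslant\norm{V\rtimes D}{N}$ is already at hand. Conversely, given $g\in\norm{V\rtimes D}{N}$, the equalities $\phi(\rho)=\sigma$ and $\phi(\mu)=\tau$ show that $\phi$ maps $\langle\rho,\mu\rangle$ onto $D$, so there is $g'\in\langle\rho,\mu\rangle$ with $\phi(g')=\phi(g)$. Then $g(g')^{-1}\in\ker\phi\cap(V\rtimes D)=V$ and normalises $N$, so by (1) it lies in $\Lambda\leqslant\langle\rho,\mu\rangle$; hence $g\in\langle\rho,\mu\rangle$, completing the proof.
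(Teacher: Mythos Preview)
Your proof is correct and follows essentially the same route as the paper's: both verify directly that $\langle\rho,\mu\rangle$ normalises $N$, compute $\norm{V}{N}=\Lambda$ by testing how sign-changes interact with the parametrisation $(x+y,x,y,x-y)$, invoke Lemma~\ref{LemmaTechnical2} for $\Lambda\leqslant\langle\rho,\mu\rangle$, and then pass from $V$ to $V\rtimes D$. The only cosmetic differences are that the paper tests $\norm{V}{N}$ on the specific elements $(b_i,b_i,0,b_i)$ and $(b_i,0,b_i,-b_i)$ rather than comparing coefficients generically, and it phrases the bootstrap for part~(2) via Dedekind's modular law rather than your equivalent coset argument through $\phi$.
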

\begin{proof}
By Lemma~\ref{LemmaTechnical2}, we have $\Lambda\leqslant \langle\rho,\mu\rangle\cap B$. Note that $V$ is the stabiliser of $\Omega$ in $B$. Since $\langle\rho,\mu\rangle$ stabilises $\Omega$, it follows that $\langle\rho,\mu\rangle\cap B\leqslant V$. It is easy to check that $\langle\rho,\mu\rangle$ normalises $N$ hence $\langle\rho,\mu\rangle\cap B\leqslant \norm{V}{N}$. We now show that $\norm{V}{N}\leqslant\Lambda$.

 Let $g=(g_0,\ldots,g_{r-1})\in \norm{V}{N}$. For $i\in\{0,\ldots,r/4-1\}$, let $b_i$ be the $i^{\mathrm{th}}$ element of the standard basis for $\ZZ_3^{r/4}$. Note that $(b_i,b_i,0,b_i)\in N$ and thus $(b_i,b_i,0,b_i)^g\in N$. This yields $g_i=g_{i+r/4}=g_{i+3r/4}$ for every $i\in\{0,\ldots,r/4-1\}$. A similar argument using $(b_i,0,b_i,-b_i)$ instead of $(b_i,b_i,0,b_i)$ yields that $g_i=g_{i+r/2}$.   Since this holds for every $i\in\{0,\ldots,r/4-1\}$, it follows that $g\in\Lambda$. This shows that  $\norm{V}{N}\leqslant\Lambda$ and  concludes the proof of~(\ref{oneone}).

Note that $V\rtimes D=V \langle \rho,\mu\rangle$. Since $\langle \rho,\mu\rangle$ normalises $N$, the following holds (Dedekind's modular law is used in the second equality)
 $$\norm {V\rtimes D} {N}=V \langle \rho,\mu\rangle \cap \norm {V\rtimes D} {N}=(V \cap \norm{V\rtimes D}{N})\langle \rho,\mu\rangle=\norm{V}{N}\langle \rho,\mu\rangle=\langle \rho,\mu\rangle.$$
\end{proof}

\begin{lemma}\label{LemmaTechnical}
Assume Notation~\ref{NotationDirected}. If $r$ is a power of $2$, then $N$ and $N^*$ are the only nontrivial proper subgroups of $S$ normalised by $\rho$. Moreover, $N^\tau=N^*$.
\end{lemma}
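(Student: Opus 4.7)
\emph{Plan.} The strategy is to realise $S$ as a cyclic $\FF_3[\rho]$-module and classify its $\rho$-invariant subgroups via polynomial divisibility. Using $\rho = \alpha\sigma$ and the explicit form of $\alpha$, one computes that $\rho$ acts on $S \cong \ZZ_3^r$ by the shift $(s_0, s_1, \ldots, s_{r-1}) \mapsto (-s_{r-1}, s_0, \ldots, s_{r-2})$, so $\rho^r$ acts as $-1$ on $S$. Taking $e = (1, 0, \ldots, 0)$, the vectors $e^{\rho^k}$ for $k = 0, \ldots, r-1$ form the standard basis of $S$, hence $S$ is cyclic over $\FF_3[\rho]$ with annihilator $(X^r + 1)$. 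This gives an isomorphism $S \cong R := \FF_3[X]/(X^r+1)$ of $\FF_3[\rho]$-modules, under which the $\rho$-invariant subgroups of $S$ correspond bijectively to the ideals of $R$, equivalently to the monic divisors of $X^r + 1$ in $\FF_3[X]$.

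Writing $r = 2^a$ with $a \geqslant 2$, the polynomial $X^r + 1$ is the cyclotomic polynomial $\Phi_{2^{a+1}}$, which over $\FF_3$ factors into irreducibles of the same degree $d$, where $d$ is the multiplicative order of $3$ modulo $2^{a+1}$. A short induction on $a$ (base case $3^2 \equiv 1 + 2^3 \pmod{2^4}$; inductive step by squaring the congruence $3^{2^{a-2}} \equiv 1 + 2^a \pmod{2^{a+1}}$) yields $d = 2^{a-1}$. Hence $X^r + 1$ has exactly $2^a/2^{a-1} = 2$ irreducible factors, so $R$ has exactly two nontrivial proper ideals, each of $\FF_3$-codimension $r/2$. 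Consequently, $S$ admits exactly two nontrivial proper $\rho$-invariant subgroups, each of order $3^{r/2}$.

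It remains to identify these two subgroups as $N$ and $N^*$. Since $|N| = |N^*| = 3^{r/2}$ and $N \cap N^* = 1$ (comparing entries in the two parametrisations forces the parameters to be zero), we only need to check that each is $\rho$-invariant. With $m = r/4$ and $g = (u+v, u, v, u-v) \in N$, applying the shift formula for $\rho$ and using $2u_{m-1}+v_{m-1} \equiv -u_{m-1}+v_{m-1} \pmod 3$ in the first block yields $g^\rho = (u'+v', u', v', u'-v')$ with $u' = (u_{m-1}+v_{m-1}, u_0, \ldots, u_{m-2})$ and $v' = (u_{m-1}, v_0, \ldots, v_{m-2})$, so $N$ is $\rho$-invariant. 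For the \emph{moreover} part, the formula $(g^\tau)_k = g_{-k-1}$ reverses the coordinate order, sending $(u+v, u, v, u-v)$ to $(\widetilde u - \widetilde v, \widetilde v, \widetilde u, \widetilde u + \widetilde v)$, where tildes denote reversal within each block of length $m$; setting $x = \widetilde v$ and $y = \widetilde u$ this equals $(-x+y, x, y, x+y) \in N^*$, so $N^\tau = N^*$, and together with $\rho^\tau = \rho^{-1}$ from \eqref{gogo2} this also shows $N^*$ is $\rho$-invariant. The main technical obstacle is the cyclotomic factorisation count, which hinges on computing the order of $3$ modulo $2^{a+1}$.
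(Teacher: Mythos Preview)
Your proposal is correct and follows essentially the same route as the paper: view $S$ as a cyclic $\FF_3\langle\rho\rangle$-module with minimal polynomial $X^r+1$, use the cyclotomic factorisation over $\FF_3$ (via the multiplicative order of $3$ modulo $2r$) to count two irreducible factors, and conclude there are exactly two nontrivial proper $\rho$-invariant subgroups, which must be $N$ and $N^*$. The only differences are expository: you compute the negacyclic shift and the order of $3$ explicitly and verify $N^\rho=N$ and $N^\tau=N^*$ by hand, whereas the paper writes down the companion matrix, cites \cite{neumannpraeger} and \cite{FF}, and leaves these checks to the reader.
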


\begin{proof}
It is easy to check that $N^\tau=N^*$. We have already noted that $N$ is a nontrivial proper subgroup of $S$ normalised by $\rho$. Since $\rho^\tau=\rho^{-1}$, the same holds for $N^*$. It remains to show that $N$ and $N^*$ are the only such subgroups.

Since $S\cong\ZZ_3^r$, we view $S$ as a vector space over $\ZZ_3$. The action of $\rho$ on $S$ by conjugation induces a linear transformation of $S$ whose corresponding matrix (with respect to the natural basis)  is:
$$A=\begin{pmatrix}
  0 & 1   & 0&\cdots & 0 \\
  \vdots & \ddots    & \ddots & \ddots& \vdots\\
  \vdots  &     & \ddots & \ddots&0  \\
  0 &  &  & \ddots&1\\
    -1 & 0 & \cdots &\cdots& 0
 \end{pmatrix}.$$

Observe that the characteristic polynomial of $A$ is $x^r+1$. Note that $S$ is a cyclic $\langle\rho\rangle$-module and hence $x^r+1$ is also the minimal polynomial of $A$ by \cite[Theorem 2.1]{neumannpraeger}.

Since $r$ is a power of $2$, $x^r+1$ is the $(2r)^{\rm th}$ cyclotomic polynomial (see \cite[Example 2.46]{FF} for example). We leave it to the reader to check that the smallest positive integer $d$ such that $3^d \equiv 1\pmod {2r}$ is $d=r/2$. It follows that  $x^r+1$ factors over $\ZZ_3$ as a product of two irreducible polynomials each having degree $r/2$ (see \cite[Theorem 2.47(ii)]{FF}). Since $\ZZ_3$ is a field, this factorisation is unique and thus $x^r+1$ has exactly two nontrivial proper factors.

Since $S$ is a cyclic $\langle\rho\rangle$-module, there is a bijection between the set of subgroups of $S$ normalised by $\rho$ and the set of factors of the minimal polynomial of $A$, that is, the set of factors of $x^r+1$. From the above paragraph we conclude that $S$ has exactly two nontrivial proper subgroups normalised by $\rho$, namely $N$ and $N^*$.
\end{proof}

\begin{theorem}\label{PXuMain}
Assume Notation~\ref{NotationDirected}, let $1\leqslant  s\leqslant  r-1$ and let $W$ act on $\ZZ_3^s\times\ZZ_r$ as in  $(\ref{gogo})$. If $G$ is a subgroup of $W$ such that $\sigma\in \phi(G)$, then, up to conjugacy in $W$, exactly one of the following occurs: 
\begin{enumerate}
\item $G$ contains a semiregular element;
\item  $(r,s)=(2^a,1)$ and  $G= N\rtimes\Lambda_i\langle\rho\rangle$ for some $i\in\{1,\ldots,r/4\}$;  \label{case2}
\item $(r,s)=(4,1)$ and  $G=N\rtimes\langle\rho,\mu\rangle$.  \label{case3}
\end{enumerate}
\end{theorem}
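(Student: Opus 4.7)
The plan is a normal-form reduction, followed by a case analysis on $K := G \cap S$ and on $\phi(G)$.

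\textbf{Normal-form reduction.} Pick any $g \in G$ with $\phi(g) = \sigma$ and write $g = tv\sigma$ with $t \in S$, $v \in V$. Conjugation of $G$ by $c = uw \in B$ (with $u \in S$, $w \in V$) replaces the $V$-part of $g$ by $v \cdot w^{-1}w^\sigma$, so it is determined only modulo $(\sigma-1)V$. Since $\cent{V}{\sigma} = \langle [{\bf{-1}}]^r\rangle$ is one-dimensional, Lemma~\ref{stupidcyclicmodules} identifies $(\sigma-1)V$ as the unique codimension-one submodule of $V$; a parity-of-weight argument shows $\alpha \notin (\sigma-1)V$. Hence either $v \in (\sigma-1)V$, in which case we may arrange $v$ to be trivial and then $g = t'\sigma$ has $g^r = [T]^r$ for $T = \sum_i t'_i \in \ZZ_3$ --- so $g^r$ is semiregular of order $3$ when $T \neq 0$, and $g$ itself is semiregular of order $r$ when $T = 0$, settling the theorem --- or $v \notin (\sigma-1)V$ and we may arrange $v = \alpha$. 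In the latter case $\cent{S}{\rho} = 0$ implies the coboundary map $c \mapsto c^{-1}c^\rho$ on $S$ is bijective, and a further $S$-conjugation reduces $g$ to $\rho$. A shorter separate argument (using the additional irreducible factors of $x^r + 1$ over $\ZZ_3$) handles $r$ not a power of $2$ by producing a semiregular $3$-element directly.

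\textbf{Case analysis on $K$.} Henceforth $r = 2^a$ and $\rho \in G$. By Lemma~\ref{LemmaTechnical}, $K \in \{1, N, N^*, S\}$; conjugating by $\tau$ inverts $\rho$ and swaps $N$ with $N^*$, so we may assume $K \neq N^*$. If $K = S$, then $[{\bf{1}}]^r \in S\leqslant G$ acts with all orbits of length $3$ and is semiregular. The case $K = 1$ is dispatched by a short argument analysing the action of $(G \cap V)\langle\rho\rangle$ on the $\langle\rho\rangle$-orbits. The critical case is $K = N$: since $\rho^r = [{\bf{-1}}]^r$ lies in $G \cap V$ and $G \cap V$ normalises $N$, Lemma~\ref{lemmaNormalisers}(\ref{oneone}) gives $G \cap V \leqslant \Lambda$, and since $G \cap V$ is a $\ZZ_2\langle\sigma\rangle$-submodule containing $\Lambda_1$, Lemma~\ref{stupidcyclicmodules} forces $G \cap V = \Lambda_i$ for some $i \in \{1, \ldots, r/4\}$. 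If $\phi(G) = \langle\sigma\rangle$, then $G = N \rtimes \Lambda_i\langle\rho\rangle$, which is case (\ref{case2}). If $\phi(G) = D$, a reduction parallel to the normal-form step applied to an $h \in G$ with $\phi(h) = \tau$ brings $h$ to $\lambda\mu$ for some $\lambda \in \Lambda$; Lemma~\ref{LemmaTechnical2} then gives $\Lambda \leqslant \langle\rho, \lambda\mu\rangle \leqslant G$, forcing $G \cap V = \Lambda$ and $G = N \rtimes \langle\rho, \mu\rangle$.

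\textbf{Cycle-structure verification and main obstacle.} For $s \geqslant 2$, $N$ itself contains a semiregular element: choose $x, y \in \ZZ_3^{r/4}$ so that no $s$ consecutive entries of the $4$-block tuple $(x+y, x, y, x-y)$ vanish simultaneously (for instance, $x = (1, 2, 1, 2, \ldots)$ and $y = (1, 1, \ldots, 1)$ works when $s = 2$). This rules out the exceptions when $s \geqslant 2$. For $s = 1$, two final verifications remain: (a) $G = N \rtimes \Lambda_i\langle\rho\rangle$ with $i \geqslant 1$ has no semiregular element, established by a case analysis of orbit lengths of the elements $n\rho^k$ and $n\lambda\rho^k$ acting on $\ZZ_3 \times \ZZ_r$, and (b) $G = N \rtimes \langle\rho,\mu\rangle$ contains a semiregular element precisely when $r \geqslant 8$, leaving $r = 4$ as case (\ref{case3}). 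These cycle-structure computations, which balance the $3$-torsion coming from $N$ against the $2$-torsion coming from $\Lambda$ and $\langle\sigma\rangle$, constitute the main technical hurdle; the tight combinatorics at $r = 4$ is exactly what makes that value exceptional.
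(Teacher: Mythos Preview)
Your overall architecture matches the paper's: reduce to $\rho\in G$, classify $K=G\cap S$ via Lemma~\ref{LemmaTechnical}, then split on $\phi(G)$. However, there is a genuine gap in how you pass from ``$G\cap V=\Lambda_i$'' to ``$G=N\rtimes\Lambda_i\langle\rho\rangle$'' (and likewise in the $\phi(G)=D$ branch).

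The issue is that knowing $G\cap S=N$ and $G\cap V=\Lambda_i$ does \emph{not} immediately yield $G\cap B=N\Lambda_i$. An element $h=tv\in G\cap B$ with $t\in S$, $v\in V$ could a priori have $t\notin N$: all you get from $N\trianglelefteqslant G$ is $v\in\norm{V}{N}=\Lambda$ and $tt^v\in N$, and for suitable $v\in\Lambda$ (e.g.\ $v=[{\bf{-1}}]^r$) the latter imposes no constraint on $t$. The missing ingredient is that $\rho$ normalises $G\cap B$, so $t^\rho t^{-1}\in N$, whence the coset $tN$ lies in $\cent{S/N}{\rho}$; and since $S/N$ is an irreducible $\langle\rho\rangle$-module of dimension $r/2\geqslant2$ (equivalently, $x=1$ is not a root of $x^r+1$ over $\ZZ_3$), this centraliser is trivial, forcing $t\in N$. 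The same gap reappears in your $\phi(G)=D$ branch: your ``reduction parallel to the normal-form step'' must again eliminate an $S$-component from $h$, and you have not said how. The paper sidesteps all of this by conjugating a full Sylow $2$-subgroup $P$ of $G$ into $V\rtimes D$ \emph{before} normalising $g$ to $\rho$; then Lemma~\ref{lemmaNormalisers}(2) gives $P\leqslant\langle\rho,\mu\rangle$ directly, and $G=NP\leqslant N\rtimes\langle\rho,\mu\rangle$ falls out with no further work.

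A few smaller points. Your treatment of ``$r$ not a power of $2$'' is muddled: the phrase ``semiregular $3$-element'' is wrong when the odd prime factor $p$ of $r$ is not $3$, and the reference to irreducible factors of $x^r+1$ is a red herring --- the paper simply observes that $\rho^{2r/p}$ is fixed-point-free of prime order $p$. For $K=1$ the paper's argument is cleaner than what you sketch: once $r=2^a$, $G$ is a $2$-group and Lemma~\ref{lemma:Pgroup} applies immediately. Finally, your elusiveness verifications for cases~(\ref{case2}) and~(\ref{case3}) are only promised, whereas the paper gives the explicit arguments (every $n\in N$ has a zero coordinate; the Sylow $2$-subgroups $\Lambda\langle\rho\rangle$ and $\langle\rho,\mu\rangle$ contain no semiregular involution).
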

\begin{proof}
We first show that at least one of (1), (2) or (3) occurs.  Let $g$ be an element of $G$ such that $\phi(g)=\sigma$. Clearly, the orbits of $g$ on $\ZZ_3\times\ZZ_r$ have length divisible by $r$. If $g$ has three orbits of length $r$ or one orbit of length $3r$, then it is semiregular. We thus assume that $g$ has two orbits, with lengths $r$ and $2r$. In particular, $g$ has order $2r$.

If $r$ is divisible by some odd prime $p$, then $g^{2r/p}$ is a fixed-point-free element of order $p$ and hence is semiregular. We may thus assume that $r$ is a power of $2$. Since $W=(S\rtimes V)\rtimes D$, it follows that $V\rtimes D$ is a Sylow $2$-subgroup of $W$ and that $S \cap G$ is the unique Sylow $3$-subgroup of $G$. Let $P$ be a Sylow $2$-subgroup of $G$ containing $g$. By replacing $P$ and $g$ by some suitable $S$-conjugates, we may assume that $P\leqslant V\rtimes D$ and thus $g$ stabilises $\Omega$. Since $g$ has order $2r$ and $\phi(g)=\sigma$, it can be seen that, up to conjugation by $V$ we may assume that $g=\rho$.

If $S\cap G=1$, then $G$ is a $2$-group and the result follows by Lemma~\ref{lemma:Pgroup}. If $S\cap G=S$, then $G$ contains a semiregular element (for example $[1]^r$). We may thus assume that $1<S\cap G<S$. Since $\rho\in  G$, it follows by Lemma~\ref{LemmaTechnical} that $S\cap G\in\{N,N^*\}$. By Lemma~\ref{LemmaTechnical}, $\tau$ interchanges $N$ and $N^*$. Moreover,  $\tau$ normalises both $\langle \rho \rangle$ (see~(\ref{gogo2})) and $V\rtimes D$ hence $\rho\in P^\tau\leqslant V\rtimes D$. In particular, up to conjugation by $\langle\tau\rangle$ and replacing $P$ by $P^\tau$ if necessary, we have $S\cap G=N$. By Lemma~\ref{lemmaNormalisers} (2), it follows that $P\leqslant\langle \rho,\mu\rangle$.

As can be seen directly from~(\ref{gogo}), an element of $S$ fixing an element of $\ZZ_3^s\times\ZZ_r$ must have at least $s$ consecutive coordinates with value $0$. If $r=4$, then let $n=(-1,1,1,0)\in N$. If $r\geqslant 8$, then let $u=[1]^{r/4}$, let $v=[1,-1]^{r/8}$ and let $n=(u+v,u,v,u-v)\in N$. It is easy to check that $n$ does not have two consecutive coordinates with value $0$ and thus is semiregular if $s\geqslant 2$. We may thus assume that $s=1$.

Suppose that $G\leqslant \dirW$. By Lemma~\ref{lemmaNormalisers}~(\ref{oneone}), we have $B\cap \langle \rho,\mu\rangle=\Lambda$.
 Therefore $\langle \rho,\mu\rangle\cap \dirW=\Lambda\langle\rho\rangle$ and hence $P\leqslant \Lambda\langle\rho\rangle$.  Since $\rho\in P$, it follows that $P=(\Lambda\cap P)\langle\rho\rangle$. Note that $\Lambda\cap P$ is a $\langle\rho\rangle$-invariant and thus $\langle\sigma\rangle$-invariant subgroup of $\Lambda$ and hence  is equal to $\Lambda_i$ for some $i\in\{0,1,\ldots,r/4\}$ (recall Notation~\ref{NotationDirected}). In particular, $G=N\rtimes P=N\rtimes \Lambda_i\langle\rho\rangle$. Since $\Lambda_1=\langle \bf[{-1}]^r\rangle\leqslant\langle\rho\rangle$ it follows that $i\geqslant 1$ and hence (\ref{case2}) holds. 

We now assume that $G\nleqslant \dirW$. Since $\sigma\in\phi(G)$, it follows that $\phi(G)=D$. In particular, there exists $d\in G$ such that $\phi(d)=\phi(\mu)$. Since $G\leqslant N\rtimes\langle \rho,\mu\rangle$ and $B\cap \langle \rho,\mu\rangle=\Lambda$, it follows that  $d=n\lambda\mu$ for some $n\in N$ and $\lambda\in\Lambda$. As $N\leqslant G$, we have that $\lambda\mu\in G$.  By Lemma~\ref{LemmaTechnical2}, $G=N\rtimes\langle \rho,\mu\rangle$.  If $r=4$, then (\ref{case3}) holds. We thus assume that $r\geqslant 8$. Let $t=([{\bf{1}}]^{r/8},[{\bf{-1}}]^{r/8})$ and let $h=(t,t,t,t)\mu$.  Note that $(t,t,t,t)\in \Lambda$ and thus by Lemma~\ref{LemmaTechnical2} $h\in G$. Moreover, $\phi(h)=\tau$ hence $h$ is fixed-point-free. Finally, an easy calculation shows that $h$ has order $2$ hence it is semiregular.  This concludes the proof that at least one of (1), (2) or (3) occurs. 

It remains to show that, in cases (\ref{case2}) and (\ref{case3}), $G$ does not contain a semiregular element. Let $n=(n_0,\ldots,n_{r-1})\in N$, with $n_i\in\ZZ_3$. By definition, we have $n_0=n_{r/2}+n_{r/4}$ and $n_{3r/4}=n_{r/4}-n_{r/2}$. This implies that at least one of $\{n_0,n_{r/4},n_{r/2},n_{3r/4}\}$ is equal to $0$. Since $s=1$, it follows that $n$ is not semiregular. It thus remains to show that $G$ does not contain a semiregular involution and, by Sylow's theorems, it suffices to check involutions in a single Sylow $2$-subgroup.

 In case (\ref{case2}), we consider the Sylow $2$-subgroup $\Lambda\langle \rho\rangle$.   Note that every involution of $\Lambda\langle \rho\rangle$ is either in $\Lambda$ or in the coset $\Lambda \rho^{r/2}$. An easy calculation shows that the latter coset does not contain an involution, while $\Lambda$ does not contain a semiregular element.

In case (\ref{case3}), we consider the Sylow $2$-subgroup $\langle\rho,\mu\rangle$. Since $r=4$, we have that $\rho\mu$ is an involution and that $\rho^{\mu}=\rho^3$. It follows that $\langle\rho,\mu\rangle$ is a semidihedral group of order $16$, with two conjugacy classes of involutions, represented by $\rho^4=(\bf{-1},\bf{-1},\bf{-1},\bf{-1})$ and $\rho\mu$. Both these elements have fixed points, which concludes the proof.
\end{proof}

\noindent\textbf{Remark.} \textit{Note that in Theorem~\ref{PXuMain}, if we consider conjugacy by $\dirW$ rather than by $W$ then cases $(2)$ and $(3)$ split into two cases each, one featuring $N$ and the other $N^*$.}

\section{Proof of Theorems~\ref{theorem:valency3Directed} and~\ref{theorem:valency4}} \label{sec:last}

We now prove Theorems~\ref{theorem:valency3Directed} and~\ref{theorem:valency4}, which we restate for convenience.

\medskip

\noindent\textbf{Theorem~\ref{theorem:valency3Directed}.}\emph{
A connected digraph $\vGa$ of out-valency at most three admits an elusive group of automorphisms $G$ if and only if $\vGa\cong\dirPX(3,2^a,1)$ and, up to conjugacy in $W$, $G= N\rtimes\Lambda_i\langle\rho\rangle$ for some $i\in\{1,\ldots,2^{a-2}\}$, where $W$, $N$, $\Lambda_i$ and $\rho$ are as in Notation~\ref{NotationDirected}.
}
\begin{proof}
Let $\vGa$ be a connected digraph of out-valency at most three that admits  an elusive group $G$ of automorphisms.
Let $v\in \vGa$.  By Corollary~\ref{cor: local action a p group}, we may assume that $G_v^{\vGa^+(v)}$ is not a $p$-group hence $|\vGa^+(v)|=3$ and $G_v^{\vGa^+(v)}\cong\mathrm{Sym}(3)$. In particular, $G_v^{\vGa^+(v)}$ is transitive and hence $\vGa$ is $G$-arc-transitive. By Lemmas~\ref{lemma:divisors of g = divisors of gv} and \ref{lemma: divisors of gv = divisors of local} we may assume that $G$ is a $\{2,3\}$-group and therefore soluble. 

Let $N$ be a minimal normal subgroup of $G$. Note that $N$ is abelian. We may assume that $N$ is not semiregular hence $N_v \neq 1$ and $N_v^{\vGa^+(v)}\neq 1$.  Since $N_v^{\vGa^+(v)}$ is a nontrivial normal subgroup of $G_v^{\vGa^+(v)}$ which is primitive, it follows that $N_v^{\vGa^+(v)}$ is transitive. By Lemma~\ref{lemma:Normal Abelian Two Orbits}, we may also assume that $N$ has at least three orbits, this implies that $\vGa$ is not a graph and, since  $\Gamma$ is $G$-arc-transitive,  it must be an asymmetric digraph. It then follows by~\cite[Theorem~$2.9$]{PraegerDigraph} that $\vGa\cong\dirPX(3,r,s)$ for some $r\geqslant 3$ and $1\leqslant  s\leqslant  r-1$. Assume Notation~\ref{NotationDirected}. By~\cite[Theorem 2.8]{PraegerDigraph}, we have that $G\leqslant \dirW$ and, since $\vGa$ is $G$-arc-transitive,  $\sigma\in \phi(G)$.  It follows by Theorem~\ref{PXuMain} that $\vGa\cong\dirPX(3,2^a,1)$ and that, up to conjugacy in $W$, $G= N\rtimes\Lambda_i\langle\rho\rangle$ for some $i\in\{1,\ldots,2^{a-2}\}$.

The converse also follows by Theorem~\ref{PXuMain}, that is, if $\vGa\cong\dirPX(3,2^a,1)$ and $G= N\rtimes\Lambda_i\langle\rho\rangle$, then $G$ is elusive.
\end{proof}

\noindent\textbf{Theorem~\ref{theorem:valency4}.}\emph{ There is no elusive group of automorphisms of a connected graph of valency at most four.}
\begin{proof}
Let $\Gamma$ be a connected graph of valency at most four with an elusive group $G$ of automorphisms. By Theorem \ref{theorem:valency3Directed} we may assume that $\Gamma$ has valency four. Let $v\in \Gamma$.  By Corollary~\ref{cor: local action a p group}, we may assume that $G_v^{\vGa(v)}$ is not a $p$-group. As it is a permutation group of degree $4$, the only possibilities are that it is isomorphic to one of $\mathrm{Sym}(3)$, $\mathrm{Alt}(4)$ or $\mathrm{Sym}(4)$. By Lemmas~\ref{lemma:divisors of g = divisors of gv} and \ref{lemma: divisors of gv = divisors of local} we may assume that $G$ is a $\{2,3\}$-group and therefore soluble. Let $N$ be a minimal normal subgroup of $G$. We may assume that $N$ is not semiregular hence $N_v \neq 1$ and $N_v^{\Gamma(v)}\neq 1$. By Lemma~\ref{lemma:Normal Abelian Two Orbits}, we may also assume that $N$ has at least three orbits.

Suppose first that $G_v^{\Gamma(v)}$  is primitive. Since $N_v^{\Gamma(v)}$ is a nontrivial normal subgroup of $G_v^{\Gamma(v)}$, it follows that $N_v^{\Gamma(v)}$ is transitive and hence $N$ has at most two orbits, which is a contradiction.

We may thus assume that $G_v^{\Gamma(v)}$ is not primitive and hence is isomorphic to $\mathrm{Sym}(3)$. In particular, $N$ is an elementary abelian $3$-group, $N_v^{\Gamma(v)}$ has the same orbits as $G_v^{\Gamma(v)}$, and every vertex $u$ has a unique neighbour $u'$ such that $G_u$ fixes $u'$. It is easily seen that $G_u=G_{u'}$ and $(u')'=u$. It follows that $\cM:=\{ \{u,u'\} \mid u\in\V(\Gamma)\}$ is a $G$-invariant perfect matching. Let $\cR=\E(\Gamma)\setminus\cM$. Note that $\cR$ is also a $G$-edge-orbit.

Since $N_v^{\Gamma(v)}$ has two orbits, $\Gamma/N$ has valency at most two. Since $N$ has at least three orbits, $\Gamma/N$ is a cycle and its edges form two orbits under the action of $G$, corresponding to $\cM$ and $\cR$. In particular, $\Gamma/N$ has even order. 

Suppose that $\Gamma/N$ has order $4$ and let $\{u,v\}\in\cR$. Then $\{u,u',v,v'\}$ is a set of representatives for the $N$-orbits. Let $\Omega=u^N\cup v^N$. Note that $\Omega$ is a block for the action of $G$, that $G^\Omega$ is transitive, that $N^\Omega$ is nontrivial and has two orbits. It follows by Lemma~\ref{lemma:Normal Abelian Two Orbits} that $N^\Omega$ contains a semiregular element. Clearly, if $n^\Omega\in N^\Omega$ is a semiregular element, then so is $n$ since $\V(\Gamma) \setminus \Omega=(u')^N\cup (v')^N$. This is a contradiction.

We may thus assume that $\Gamma/N$ has order at least $6$. In particular, between any two edges of $\cM$, there is at most one edge in $\Gamma$. We define a new graph $\Merge\Gamma$, with vertex-set $\cM$ and two elements $\{u,u'\}$ and $\{w,w'\}$ of $\cM$ adjacent if and only if there is an edge in $\Gamma$ between  $\{u,u'\}$ and $\{w,w'\}$. It is not hard to check that $\Merge\Gamma$ is a connected $6$-valent graph on which $G$ acts faithfully and arc-transitively. 

If an element $g$ of $G$ fixes a vertex $w$ of $\Gamma$ then $g$ must also fix $w'$ and thus $g$ fixes the vertex $\{w,w'\}$ of $\Merge \Gamma$. We  may thus assume that $G$ is elusive on $\Merge \Gamma$ and hence that $N$ is not semiregular on $\Merge \Gamma$. By~\cite[Theorem~$1$]{PraegerXu}, it follows that $\mathrm M \Gamma\cong\mathrm{PX}(3,r,s)$ for some $r\geqslant 3$ and $1\leqslant  s \leqslant  r-1$.

Assume Notation~\ref{NotationDirected}. If $G\leqslant  W$ then, since $\Merge \Gamma$ is $G$-arc-transitive, $\phi(G)=D$ and Theorem~\ref{PXuMain} implies that $(r,s)=(4,1)$. Similarly, if $G\nleqslant W$ then~\cite[Theorem 2.13]{PraegerXu} again implies that  $(r,s)=(4,1)$. In particular, $\Merge\Gamma$ is isomorphic to $\PX(3,4,1)$ which is isomorphic to $\K_{6,6}$, a complete bipartite graph of order $12$. Since $G$ is a $\{2,3\}$-group and acts arc-transitively on $\K_{6,6}$, the classification of elusive groups of degree $12$ (see~\cite[Figure 1]{sevenauthor}) yields that $G\cong\AGammaL(1,9)$. However, it can be checked that this group does not have an elusive action of degree $|\V(\Gamma)|=2|\V(\Merge\Gamma)|=24$.
\end{proof}

{\bf Acknowledgement.} The third author would like to thank the other three for their hospitality during his visit to Perth in December 2013. We are grateful to the referees for useful comments and feedback.

\end{document}